\g@addto@macro\normalsize{%
  \setlength\abovedisplayskip{10pt}
  \setlength\belowdisplayskip{10pt}
  \setlength\abovedisplayshortskip{5pt}
  \setlength\belowdisplayshortskip{8pt}
}
\newtheoremstyle{normal}
{5pt}
{5pt}
{\normalfont}
{}
{\bfseries}
{}
{0.4em}
{\bfseries{\thmname{#1}\thmnumber{ #2}.\thmnote{ \hspace{0.5em}(#3)\newline}}}
\newtheoremstyle{kursiv}
{5pt}
{5pt}
{\itshape}
{}
{\bfseries}
{}
{0.4em}
{\bfseries{\thmname{#1}\thmnumber{ #2}.\thmnote{ \hspace{0.5em}(#3)\newline}}}
\theoremstyle{kursiv}
\theoremstyle{normal}
\newtheorem{thm}{Theorem}
\newtheorem{ex}[thm]{Example}
\newtheorem{rem}[thm]{Remark}
\newtheorem{cor}[thm]{Corollary}
\newtheorem{lem}[thm]{Lemma}
\newtheorem{prop}[thm]{Proposition}
\renewcommand{\epsilon}{\varepsilon}
\newcommand{\T}{(T(t))_{t\geqslant0}}
\newcommand{\id}{\operatorname{id}\nolimits}
\newcommand{\cs}{\operatorname{cs}\nolimits}
\newcommand{\Bigsum}[2]{\ensuremath{\mathop{\textstyle\sum}_{#1}^{#2}}}
\newcommand{\Cnull}{\operatorname{C_0}\nolimits}
\definecolor{grey}{gray}{.3}
\newcommand{\N}{\mathbb{N}}
\newcommand{\norm}[1]{\left\lVert #1 \right\rVert}
\newcommand{\abs}[1]{\left | #1 \right |}
\begin{document}

\title{Non Power bounded generators\\of strongly continuous semigroups}

\author{Anna Goli\'{n}ska\hspace{0.5pt}\MakeLowercase{$^{\text{1,\hspace{1pt}a}}$} and Sven-Ake Wegner\hspace{0.5pt}\MakeLowercase{$^{\text{2}}$}}

\renewcommand{\thefootnote}{}
\hspace{-1000pt}\footnote{\hspace{5.5pt}2010 \emph{Mathematics Subject Classification}: Primary 47D06; Secondary 34G10, 46E10, 46A03, 46A45.}
\hspace{-1000pt}\footnote{\hspace{5.5pt}\emph{Key words and phrases}: Strongly continuous semigroup, power bounded operator, Fr\'{e}chet space\vspace{1.6pt}}

\hspace{-1000pt}\footnote{\hspace{0pt}$^{1}$\,Faculty of Mathematics and Computer Science, Adam Mickiewicz University, Umultowska 87, 61-614 Pozna\'{n}, Poland,\linebreak\phantom{x}\hspace{1.2pt}Phone: +48\hspace{1.2pt}61\hspace{1.2pt}829\hspace{1.2pt}-\hspace{1.2pt}54\hspace{1.2pt}-\hspace{1.2pt}80, E-Mail: czyzak@amu.edu.pl.\vspace{1.6pt}}

\hspace{-1000pt}\footnote{\hspace{0pt}$^{2}$\,Corresponding author: Bergische Universit\"at Wuppertal, FB C -- Mathematik, Gau\ss{}stra\ss{}e 20, 42119 Wuppertal, Ger-\linebreak\phantom{x}\hspace{1.2pt}many, Phone:\hspace{1.2pt}\hspace{1.2pt}+49\hspace{1.2pt}202\hspace{1.2pt}/\hspace{1.2pt}439\hspace{1.2pt}-\hspace{1.2pt}2531, Fax:\hspace{1.2pt}\hspace{1.2pt}+49\hspace{1.2pt}202\hspace{1.2pt}/\hspace{1.2pt}439\hspace{1.2pt}-\hspace{1.2pt}3724 E-Mail: wegner@math.uni-wuppertal.de.\vspace{1.6pt}}

\hspace{-1000pt}\footnote{\hspace{0pt}$^{\text{a}}$\,Anna Goli\'{n}ska is supported by the National Science Centre (Poland) grant no. 2013/10/A/ST1/00091.\vspace{1.6pt}}

\begin{abstract} It is folklore that a power bounded operator on a sequentially complete locally convex space generates a uniformly continuous $\Cnull$-semigroup which is given by the corresponding power series representation. Recently, Doma\'{n}ski  asked if in this result the assumption of being power bounded can be relaxed. We employ conditions introduced by \.{Z}elazko to give a weaker but still sufficient condition for generation and apply our results to operators on classical function and sequence spaces.
\end{abstract}

\maketitle

\vspace{-12pt}
\section{Introduction}\label{INTRO}

Given a Banach space, then every linear and continuous operator from the space into itself generates a $\Cnull$-semigroup which is given by an exponential series representation. In the Banach space world of $\Cnull$-semigroups this case of a continuous generator is thus considered to be the trivial situation. The picture changes completely already in the considerably harmless appearing case of complete metrizable spaces. Based on a question of Conejero \cite{Conejero} several relations between continuity of the generator, uniform continuity of the semigroup and validity of a series representation have recently been revealed by Albanese, Bonet, Ricker \cite[Thm.\,3.3 and Prop.\,3.2]{ABR10} and Frerick, Jord\'{a}, Kalmes, Wengenroth \cite{FJKW}.

\smallskip

In the general case of a sequentially complete locally convex space $X$, it seems that the only result available so far is the generation theorem mentioned in the book \cite{Y} of Yosida, which states that a power bounded operator is always a generator. Here, $A\in L(X)$ is \textit{power bounded} if all its powers form an equicontinuous subset of $L(X)$, i.e., if for every continuous seminorm $p$ on $X$ there exists a continuous seminorm $q$ on $X$ and a constant $M>0$ such that the estimate $p(A^nx)\leqslant{}M q(x)$ holds for all $n\in\mathbb{N}_0$ and all $x\in X$. It is straight forward to generalize the above statement as follows.

\smallskip

\begin{thm}\label{THM-Y} Let $A\in L(X)$. Assume that there exists $\mu>0$ such that $\mu{}A$ is power bounded. Then $A$ generates a uniformly continuous $\Cnull$-semigroup $\T$ which is given by the formula $T(t)=\sum_{n=0}^{\infty}(tA)^n/n!$ for $t\geqslant0$ where the series converges absolutely with respect to the topology of uniform convergence on the bounded subsets of $X$.
\end{thm}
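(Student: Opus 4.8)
The plan is to construct the semigroup directly from the exponential series and to read off every required property from a single estimate. First I would turn the hypothesis into a usable form: power boundedness of $\mu A$ says that $\{(\mu A)^n:n\in\N\}=\{\mu^nA^n\}$ is equicontinuous, so for each continuous seminorm $p$ there are a continuous seminorm $q$ and a constant $M>0$ with
\[
p(A^nx)\leq M\mu^{-n}q(x)\qquad\text{for all }n\in\N,\ x\in X.
\]
This inequality is the only input drawn from the assumption, and everything below is obtained by summing it against the scalar exponential series.

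Second, I would settle convergence. Fixing a bounded set $B\subseteq X$ and writing $p_B(S):=\sup_{x\in B}p(Sx)$ for the corresponding seminorm on $L(X)$, the number $C_B:=\sup_{x\in B}q(x)$ is finite because $q$ is continuous and $B$ bounded, and
\[
\sum_{n=0}^{\infty}p_B\!\Bigl(\tfrac{(tA)^n}{n!}\Bigr)\leq MC_B\sum_{n=0}^{\infty}\frac{(t/\mu)^n}{n!}=MC_Be^{t/\mu}<\infty,
\]
which is precisely absolute convergence in the topology of uniform convergence on bounded subsets. For each fixed $x$ the partial sums are Cauchy in $X$ (dominated by $Mq(x)e^{t/\mu}$ in every $p$), so sequential completeness of $X$ lets me define $T(t)x:=\sum_{n\geq0}(tA)^nx/n!$ pointwise; then $p(T(t)x)\leq Me^{t/\mu}q(x)$ shows $T(t)\in L(X)$, and the tail of the displayed bound shows the partial sums converge to $T(t)$ in $L(X)$ for the bounded-convergence topology. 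Working pointwise first is what avoids having to assume that $L(X)$ is itself sequentially complete in that topology.

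Third come the algebraic and continuity statements. That $T(0)=\id$ is immediate. For the semigroup law I would apply the operators to a fixed $x$ and consider the double series $\sum_{n,m}s^nt^mA^{n+m}x/(n!\,m!)$, which is absolutely convergent by the same bound; the main technical point of the whole argument is to justify, using that a countable absolutely convergent series in a sequentially complete locally convex space converges with a rearrangement-invariant sum, the regrouping by $k=n+m$ that turns this into $\sum_k(s+t)^kA^kx/k!=T(s+t)x$, whence $T(s)T(t)=T(s+t)$. The remaining properties all flow from the estimate $p(T(t)x-x)\leq\sum_{n\geq1}t^np(A^nx)/n!\leq Mq(x)(e^{t/\mu}-1)$: taking the supremum over $x\in B$ gives $p_B(T(t)-\id)\leq MC_B(e^{t/\mu}-1)\to0$ as $t\downarrow0$, which together with the semigroup law makes $t\mapsto T(t)$ continuous into $L(X)$ for the bounded-convergence topology, i.e. $\T$ is uniformly continuous (and in particular strongly continuous).

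Finally I would identify the generator. Rewriting the difference quotient as
\[
\tfrac{1}{t}\bigl(T(t)x-x\bigr)-Ax=\sum_{n\geq1}\frac{t^nA^{n+1}x}{(n+1)!}
\]
and estimating its $p$-value by $M\mu^{-1}q(x)\sum_{n\geq1}(t/\mu)^n/(n+1)!\leq M\mu^{-1}q(x)(e^{t/\mu}-1)\to0$ shows that the difference quotients converge to $Ax$ for every $x\in X$; hence the generator has domain all of $X$ and coincides with $A$. I expect the only genuinely delicate step to be the rearrangement of the double series in the non-normable setting, where the Banach-space reflex of "absolute implies unconditional" must be replaced by its seminorm-by-seminorm locally convex counterpart; the rest is the classical exponential-series computation carried out one continuous seminorm at a time.
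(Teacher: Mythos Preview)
Your proposal is correct. The paper does not give a standalone proof of this statement; it presents Theorem~\ref{THM-Y} as a straightforward generalization of the Yosida folklore result and then recovers it as the special case $\mu_n=\mu^{-n}$ of the more general Theorem~\ref{THM-1}, whose hypothesis~\eqref{NEW-1} is clearly satisfied under a-boundedness.

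Your direct argument and the paper's proof of Theorem~\ref{THM-1} run in parallel for convergence of the series in $L_b(X)$, uniform continuity, and identification of the generator; the one genuine methodological difference is the proof of the evolution property $T(s)T(t)=T(s+t)$. You argue via absolute summability of the double family $\bigl(s^nt^mA^{n+m}x/(n!\,m!)\bigr)_{n,m}$ in $X$ and rearrange along diagonals, which the strong a-boundedness estimate $p(A^nx)\leqslant M\mu^{-n}q(x)$ (a single $q$ working for all powers, geometric growth constants) makes legitimate seminorm by seminorm. The paper instead establishes a Mertens-type Cauchy product identity, showing via an $\epsilon/3$ splitting that the partial Cauchy products $C_n$ converge to $T(s)T(t)$ in $L_s(X)$. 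That route is longer, but it is what is needed under the weaker hypothesis~\eqref{NEW-1}, where $q$ depends on the radius $R$ and the constants $\mu_n$ are not assumed submultiplicative, so the double-series bound you use is not immediately available. For Theorem~\ref{THM-Y} itself your approach is the shorter one; the paper's argument buys the extra generality of Theorem~\ref{THM-1}.
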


\smallskip

Inspired by Allan \cite[p.~400]{A} in this paper  operators with the property assumed in Theorem \ref{THM-Y} are said to be \textit{a-bounded}. Recently, Doma\'{n}ski \cite{D} asked, if the statement above can be improved in the sense of a condition weaker than a-boundedness which still assures the series representation or at least the generator property. Frerick, Jord\'{a}, Kalmes, Wengenroth \cite{FJKW} characterized generation for a special class of Fr\'{e}chet spaces by a condition closely related to the notions of \textit{m-topologizable} and \textit{topologizable} operators due to \.{Z}elazko \cite{Z1, Z2}.

\smallskip

In this paper we consider a quantitative version of topologizability which is weaker than m-topologizabili\-ty but implies the generator property and guarantees a series representation. Also for the case of an m-topologizable operator this result is new and improves Theorem \ref{THM-Y}. In combination with Bonet \cite[Ex.\,6]{JB} the latter shows that there exist complete, non-normable spaces on which every continuous operator is a generator---as in the case of a Banach space---although in general this is well-known to be not the case, see \cite[Ex.\,1]{FJKW}. We provide examples which show that our result applies to a class of operators which is strictly larger than those of the m-topologizable ones. Our counterexamples show that topologizability alone in general neither is necessary nor sufficient for generation. A variation of our main result, see Theorem \ref{THM-2}, suggests that it might be possible that an operator generates a $\Cnull$-semigroup but that the series representation is only valid on a 
finite time interval and fails for large times. It is open if such $\Cnull$-semigroups does really occur in nature.

\smallskip

For the theory of locally convex spaces we refer to Meise, Vogt \cite{MV} and Jarchow \cite{Jarchow}. For basic facts about semigroups on locally convex spaces we refer to Yosida \cite{Y} and K{\=o}mura \cite[Section 1]{Komura}.

\bigskip

\vspace{-3pt}
\section{Notation}\label{NOT}

\smallskip

For the whole article let $X$ be a sequentially complete locally convex space. We denote by $\cs(X)$ the system of all continuous seminorms on $X$, by $\mathcal{B}$ the collection of all bounded subsets of $X$ and by $L(X)$ the space of all linear and continuous maps from $X$ into itself. We write $L_b(X)$, if $L(X)$ is furnished with the topology of uniform convergence on the bounded subsets of $X$ given by the seminorms 
$$
q_B(S)=\sup_{x\in B}q(Sx)
$$
for $S\in L(X)$, $B\in\mathcal{B}$ and $q\in\cs(X)$. Under a $\Cnull$-semigroup $\T$ on $X$ we understand a family of maps $T(t)\in L(X)$ such that $T(0)=\id_X$, $T(t+s)=T(t)T(s)$ for $t$, $s\geqslant0$ and $\lim_{t\rightarrow t_0}T(t)x=T(t_0)x$ for $x\in X$ and $t_0\geqslant0$. $\T$ is said to be uniformly continuous if $T(\cdot)\colon[0,\infty)\rightarrow L_b(X)$ is continuous. The generator $A\colon D(A)\rightarrow X$ of a $\Cnull$-semigroup $\T$ is defined by 
$$
Ax=\lim_{t\searrow0}{\textstyle\frac{T(t)x-x}{t}} \; \text{ for } x\in D(A)=\{x\in X\:;\:\lim_{t\searrow0}{\textstyle\frac{T(t)x-x}{t}}\text{ exists}\}.
$$
The aim of this article is to identify conditions which guarantee that for a given $A\in L(X)$ there exists a $\Cnull$-semigroup $\T$ such that $A$ is its generator. In Section \ref{INTRO} we mentioned already a classical condition of this type. In the remainder we use the following three conditions which appeared in the literature in different contexts. The first condition below is the assumption of Theorem \ref{THM-Y}. In view of Allan \cite[p.~400]{A} we say that an operator is \textit{a-bounded}, if
\begin{equation}\label{A-BDD}
\exists\:\mu>0\;\forall\:p\in\cs(X)\:\exists\:q\in\cs(X)\;\forall\:n\in\mathbb{N}_0,\,x\in X\colon p(A^nx)\leqslant{}\mu^n q(x)
\end{equation}
holds; this prevents confusion with the notion of a bounded operator in the sense of \cite[p.~375]{MV}. The following two conditions are due to \.{Z}elazko \cite{Z1, Z2} and arise in view of \eqref{A-BDD} by allowing that $\mu$ is not constant but may depend on $p$, $q$ or $n$. We say that $A$ is \textit{m-topologizable} if
\begin{equation}\label{M-TOP}
\forall\:p\in\cs(X)\:\exists\:q\in\cs(X),\,\mu>0\;\forall\:n\in\mathbb{N}_0,\,x\in X\colon p(A^nx)\leqslant \mu^n q(x)
\end{equation}
holds and we say that $A$ is \textit{topologizable} if
\begin{equation}\label{TOP}
\forall\:p\in\cs(X)\:\exists\:q\in\cs(X)\;\forall\:n\in\mathbb{N}_0\;\exists\:\mu_n>0\;\forall\:x\in X\colon p(A^nx)\leqslant \mu_n\,q(x) 
\end{equation}
is valid. In both conditions $\mu$ depends on $p$, and in the sense that different configurations of $n$ and $q$ are possible, also on $q$. Clearly, every a-bounded operator is m-topologizable and every m-topologizable operator is topologizable. In Theorem \ref{THM-1} below we employ a quantitative version of topologizability; this will make clear why in our notation only the dependency $\mu=\mu_n$ is mentioned explicitly.

\bigskip

\vspace{-3pt}
\section{Generation}\label{SEC-0}

\smallskip

\begin{thm}\label{THM-1} Let $X$ be a sequentially complete locally convex space and $A\in L(X)$. Assume that\vspace{-5pt}
\begin{equation}\label{NEW-1}
\forall\:R>0,\,p\in\cs(X)\;\exists\:q\in\cs(X)\;\forall\:n\in\mathbb{N}_0\;\exists\:\mu_n>0\;\forall\:x\in X\colon p(A^nx)\leqslant\mu_n q(x)\text{ and }\!\!\Bigsum{n=0}{\infty}{\textstyle\frac{\mu_n}{n!}}R^n<\infty\vspace{-5pt}
\end{equation}
holds. Then $A$ generates a uniformly continuous semigroup $\T$ which is given by the exponential series expansion\vspace{-5pt}
\begin{equation*}
\text{(EXP)}\;\;\;\;\;\;T(t)=\Bigsum{n=0}{\infty}{\textstyle\frac{t^n}{n!}}A^n\vspace{-5pt}
\end{equation*}
where the latter converges in $L_b(X)$.
\end{thm}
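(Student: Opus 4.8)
The plan is to construct the operators directly from the series, establish convergence in $L_b(X)$, and then verify the semigroup axioms, uniform continuity, and the generator identity by hand. The single mechanism driving everything is that whenever an estimate over a range of times is needed, one first fixes an appropriate radius $R$ and only afterwards invokes \eqref{NEW-1} to obtain a seminorm $q$ together with the associated summable sequence $(\mu_n)_n$.

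First I would fix $t\geqslant0$ and show that (EXP) converges. For $x\in X$ and $p\in\cs(X)$, applying \eqref{NEW-1} with $R=t$ produces $q$ and $(\mu_n)_n$ with $p(A^nx)\leqslant\mu_n q(x)$, whence the partial sums satisfy $p\bigl(\Bigsum{n=N+1}{M}\tfrac{t^n}{n!}A^nx\bigr)\leqslant q(x)\Bigsum{n=N+1}{M}\tfrac{\mu_n}{n!}t^n$. Since $\sum_n\mu_n t^n/n!$ converges, this is Cauchy, and by sequential completeness of $X$ the pointwise limit $T(t)x:=\lim_N\sum_{n=0}^N\tfrac{t^n}{n!}A^nx$ exists; linearity is clear and the estimate $p(T(t)x)\leqslant q(x)\sum_n\tfrac{\mu_n}{n!}t^n$ yields $T(t)\in L(X)$. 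Taking the supremum of the tail estimate over a bounded set $B$ gives $p_B\bigl(T(t)-\sum_{n=0}^N\tfrac{t^n}{n!}A^n\bigr)\to0$, so (EXP) converges in $L_b(X)$, and in fact absolutely because $\sum_n\tfrac{t^n}{n!}\sup_{x\in B}q(A^nx)<\infty$. Next, $T(0)=\id_X$ is immediate, while for the law $T(t+s)=T(t)T(s)$ I would fix $x$ and $p$ and apply \eqref{NEW-1} this time with $R\geqslant t+s$: the double series $\sum_{n,m}\tfrac{t^ns^m}{n!m!}A^{n+m}x$ is then absolutely convergent, since grouping by $k=n+m$ gives $\sum_k\tfrac{\mu_k}{k!}(t+s)^k q(x)<\infty$. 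Using continuity of $A^n$ to write $A^nT(s)x=\sum_m\tfrac{s^m}{m!}A^{n+m}x$, the rearrangement is legitimate and the binomial theorem collapses the inner sum to $(t+s)^k/k!$.

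For uniform continuity I would estimate, for $t,t_0$ in a fixed interval $[0,R]$, that $p_B(T(t)-T(t_0))\leqslant\bigl(\sup_{x\in B}q(x)\bigr)\sum_n\tfrac{|t^n-t_0^n|}{n!}\mu_n$ and then use $|t^n-t_0^n|\leqslant nR^{n-1}|t-t_0|$. The extra factor $n$ is the one genuinely delicate point: to keep the resulting series summable I would apply \eqref{NEW-1} not with radius $R$ but with a strictly larger radius $R'>R$, so that $nR^{n-1}\leqslant C(R')^n$ holds for a suitable constant $C$ and the bound becomes a multiple of $|t-t_0|$. This gives local Lipschitz continuity of $T(\cdot)\colon[0,\infty)\to L_b(X)$, hence uniform continuity, and in particular strong continuity, so that $\T$ is a $\Cnull$-semigroup. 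Finally, to identify the generator I would write $\tfrac{T(t)x-x}{t}-Ax=\sum_{n\geqslant2}\tfrac{t^{n-1}}{n!}A^nx$ and bound its $p$-seminorm by $t\,q(x)\sum_{n\geqslant2}\tfrac{\mu_n}{n!}R^{n-2}$ for $t\in(0,R]$, which tends to $0$ as $t\searrow0$; hence the limit defining the generator exists on all of $X$ and equals $Ax$, so $A$ is the generator.

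The step I expect to be the main obstacle is not any single computation but the disciplined bookkeeping of the quantifiers in \eqref{NEW-1}: because $q$ and $(\mu_n)_n$ depend on the chosen radius, each argument must fix its radius in advance and large enough, $R\geqslant t$ for convergence, $R\geqslant t+s$ for the Cauchy product, and $R'>R$ for the factor-$n$ estimate in the continuity proof. This last estimate genuinely requires the freedom to enlarge the radius, which is exactly the feature that distinguishes the quantitative hypothesis \eqref{NEW-1} from a single-radius condition.
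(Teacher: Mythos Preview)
Your proposal is correct; the overall architecture (convergence of (EXP), $T(0)=\id_X$, semigroup law, uniform continuity, identification of the generator) matches the paper, but two of the steps are handled by genuinely different arguments.

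For the semigroup law $T(t+s)=T(t)T(s)$ the paper does not invoke absolute summability of the double series. Instead it runs a Mertens-type argument on the Cauchy product: with $a_i=\tfrac{t^i}{i!}A^i$, $b_i=\tfrac{s^i}{i!}A^i$, $c_i=\sum_{k\leqslant i}a_kb_{i-k}$ and the corresponding partial sums $A_n,B_n,C_n$, it writes $C_n=A_nB_\infty+\sum_{i\leqslant n}a_{n-i}(B_i-B_\infty)$ and shows $C_n\to A_\infty B_\infty$ in $L_s(X)$ by splitting the last sum at a cut-off $N$. Your route---apply \eqref{NEW-1} with some $R\geqslant t+s$, observe that grouping by $k=n+m$ gives $\sum_k\tfrac{\mu_k}{k!}(t+s)^k<\infty$, and then rearrange the absolutely convergent double series via the binomial identity---is shorter and cleaner; the paper's argument has the (mild) advantage of only using convergence in $L_s(X)$ and not an unconditional-convergence principle.

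For uniform continuity the paper first proves continuity at $t=0$ (using $R=1$) and then transports it to arbitrary $t_0>0$ through the semigroup law together with the local bound $p(T(t)x)\leqslant f(t)q(x)$ obtained from \eqref{NEW-1}. Your approach bypasses this two-step scheme: the mean-value inequality $|t^n-t_0^n|\leqslant nR^{n-1}|t-t_0|$ on $[0,R]$, combined with applying \eqref{NEW-1} for a strictly larger radius $R'>R$ so that $nR^{n-1}\leqslant C(R')^n$, yields a local Lipschitz bound in $L_b(X)$ directly. This is where your remark about the ``freedom to enlarge the radius'' really pays off; the paper never needs that freedom for continuity because it leverages the semigroup property instead. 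One small point of bookkeeping: since the seminorm $q$ and the sequence $(\mu_n)$ come from the application of \eqref{NEW-1} with radius $R'$, you should invoke \eqref{NEW-1} with $R'$ from the outset rather than first with $R$ and then with $R'$; your sentence could be read either way. Also, in your absolute-convergence line the expression $\sup_{x\in B}q(A^nx)$ should be $\sup_{x\in B}p(A^nx)$.
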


\begin{proof} Fix $R>0$, $p\in\cs(X)$ and $B\subseteq X$ bounded. Select $q$ and $(\mu_n)_{n\in\mathbb{N}_0}$ such that \eqref{NEW-1} is satisfied. Put $K=\sup_{x\in B}q(x)$. By \eqref{NEW-1} we have $p(A^nx)\leqslant\mu_nq(x)$ and consequently $p(\frac{t^n}{n!}A^nx)\leqslant\frac{t^n}{n!}\mu_nq(x)$ for every $n\in\mathbb{N}_0$ and every $x\in X$. Thus for all $t<R$
$$
\Bigsum{n=0}{\infty}p_B({\textstyle\frac{t^n}{n!}}A^n)=\Bigsum{n=0}{\infty}\sup_{x\in B}p({\textstyle\frac{t^n}{n!}}A^nx)\leqslant \sup_{x\in B}q(x)\Bigsum{n=0}{\infty}{\textstyle\frac{t^n}{n!}}\mu_n=Kf(t)<\infty
$$
holds, where $f(t)=\Bigsum{n=0}{\infty}{\textstyle\frac{\mu_n}{n!}}t^n$ according to \eqref{NEW-1} defines a function $f\in C([0,R))$. This implies the absolute convergence of the exponential series in $L_b(X)$.

\smallskip

It is clear that $T(0)=\id_X$ holds. Let $t$, $s\geqslant0$ be given. Fix $x\in X$ and $p\in\cs(X)$. Select $q$ and $(\mu_n)_{n\in\mathbb{N}_0}$ according to \eqref{NEW-1} for some $R>\max\{s,t\}$. Put 
$$
a_i={\textstyle\frac{t^i}{i!}A^i},\;b_i={\textstyle\frac{s^i}{i!}A^i} \; \text{ and } \; c_i=\Bigsum{k=0}{i}a_kb_{i-k}.
$$
Denote by $A_n$, $B_n$ and $C_n$ the corresponding partial sums. By $A_{\infty}$ and $B_{\infty}$ we denote the limits of $A_n$ and $B_n$ in $L_b(X)$. Then
\begin{equation*}
C_n=\Bigsum{i=0}{n}\Bigsum{k=0}{i}a_kb_{i-k}=\Bigsum{i=0}{n}a_{n-i}B_i=A_nB_{\infty}+\Bigsum{i=0}{n}a_{n-i}(B_i-B_{\infty})
\end{equation*}
holds. We fix $\epsilon>0$. Since $A_n\rightarrow A_{\infty}$ in $L_s(X)$ there exists $L\in\mathbb{N}_0$ such that $p((A_n-A_{\infty})B_{\infty}x)\leqslant\frac{\epsilon}{3}$ holds for all $n\geqslant L$. Since $B_n\rightarrow B_{\infty}$ in $L_s(X)$ there exists $N\in\mathbb{N}_0$ such that $q((B_i-B_{\infty})x)\leqslant\frac{\epsilon}{3}\cdot f(t)^{-1}$ holds for $i\geqslant N$. Now we put $K=\max_{i=0,\dots,N-1}q((B_i-B_{\infty})x)$ and select $M\in\mathbb{N}_0$ such that the estimate $\sum_{i=M}^{\infty}\frac{t^i}{i!}\mu_i\leqslant\frac{\epsilon}{3}K^{-1}$ holds. For $n\geqslant\max\{L,M+N\}$ we compute
\begin{eqnarray*}
p((C_n-A_{\infty}B_{\infty})x) & = & p((A_n-A_{\infty})B_{\infty}x+\Bigsum{i=0}{n}a_{n-i}(B_i-B_{\infty})x)\\
& \leqslant & p((A_n-A_{\infty})B_{\infty}x)+\Bigsum{i=N}{n}p(a_{n-i}(B_i-B_{\infty})x)+\Bigsum{i=0}{N-1}p(a_{n-i}(B_i-B_{\infty})x)
\end{eqnarray*}
The first summand is less or equal to $\frac{\epsilon}{3}$ since $n\geqslant L$. For the second summand we estimate
\begin{equation*}
\Bigsum{i=N}{n}p(a_{n-i}(B_i-B_{\infty})x)\leqslant{}\Bigsum{i=N}{n}{\textstyle\frac{t^{n-i}}{(n-i)!}}\mu_{n-i}q((B_i-B_{\infty})x)\leqslant{}{\textstyle\frac{\epsilon}{3}}\cdot f(t)^{-1}\Bigsum{i=N}{n}{\textstyle\frac{t^{n-i}}{(n-i)!}}\mu_{n-i}\leqslant {\textstyle\frac{\epsilon}{3}}.
\end{equation*}
Here we used \eqref{NEW-1} as at the very beginning of this proof for the first estimate. The second follows from $i\geqslant N$ according to our selection of $N$. For the third summand we get
\begin{equation*}
\Bigsum{i=0}{N-1}p(a_{n-i}(B_i-B_{\infty})x)\leqslant\Bigsum{i=0}{N-1}{\textstyle\frac{t^{n-i}}{(n-i)!}}\mu_{n-i}q((B_i-B_{\infty})x)\leqslant K \Bigsum{i=0}{N-1}{\textstyle\frac{t^{n-i}}{(n-i)!}}\mu_{n-i}\leqslant K \Bigsum{i=M}{\infty}{\textstyle\frac{t^{i}}{i!}}\mu_{i}\leqslant{\textstyle\frac{\epsilon}{3}}.
\end{equation*}
This shows that $C_n\rightarrow A_{\infty}B_{\infty}$ holds in $L_s(X)$. By construction $A_{\infty}=T(t)$ and $B_{\infty}=T(s)$. Moreover, $C_n=\sum_{i=0}^n\frac{(t+s)^i}{i!}A^i$ by direct computation and thus by the first part $C_n\rightarrow T(t+s)$ in $L_s(X)$. This establishes the evolution property $T(t+s)=T(t)T(s)$.

\smallskip

Let $p\in\cs(X)$ and $B\subseteq X$ bounded be given. Select $q\in\cs(X)$ and $(\mu_n)_{n\in\mathbb{N}_0}$ as in \eqref{NEW-1} for $R=1$. Put $K=\sup_{x\in B}q(x)$ and denote by $f$ the function given by the power series in \eqref{NEW-1}. Then we have
\begin{equation*}
p_B(T(0)-T(t))=\sup_{x\in B}p\bigl(\Bigsum{n=1}{\infty}{\textstyle\frac{t^n}{n!}}A^nx\bigr)\leqslant K \Bigsum{n=1}{\infty}{\textstyle\frac{t^n}{n!}}\mu_n=K(f(t)-f(0))
\end{equation*}
for every $0\leqslant{}t<1$ which shows together with the evolution property that $T(\cdot)\colon[0,\infty)\rightarrow L_b(X)$ is continuous at $t=0$. We have the following condition
$$
\forall\:p\in\cs(X),\,R>0\;\exists\:q\in\cs(X),\,f\in C([0,R))\;\forall\:x\in X,\,t\in[0,R)\colon p(T(t)x)\leqslant f(t)q(x).
$$
Indeed, for given $p\in\cs(X)$, $R>0$ we select $q\in\cs(X)$ and $(\mu_n)_{n\in\mathbb{N}_0}$ as in \eqref{NEW-1}. We define $f\in C([0,R))$ by the series in \eqref{NEW-1} and use the estimate in \eqref{NEW-1} to obtain $p(T(t)x)\leqslant q(x)f(t)$ as desired. Let now $p\in\cs(X)$ and $B\subseteq X$ bounded be given. We fix $t>0$ and put $R=t+1$. Then we select $q\in\cs(X)$ and $f\in C([0,R))$ according to the condition above. For $0<h<1$ we compute
$$
p_B(T(t)-T(t+h)) = \sup_{x\in B}p(T(t)(T(0)-T(h))) \leqslant f(t)\sup_{x\in B}q(T(0)-T(h))=f(t)\,q_B(T(0)-T(h))
$$
which converges to zero for $h\searrow0$ by the first part. For $-t/2<h<0$ we compute
$$
p_B(T(t)-T(t+h)) = \sup_{x\in B}p(T(t+h)(T(-h)-T(0)))\leqslant f(t+h)\,q_B(T(-h)-T(0))
$$
which converges to zero for $h\nearrow0$ by the first part, since $f([0,t])\subseteq\mathbb{C}$ is bounded, and since $0\leqslant t+h \leqslant t$ holds for all $h$ under consideration. This establishes the continuity of $T(\cdot)\colon[0,\infty)\rightarrow L_b(X)$ at every $t>0$.
\smallskip
\\Finally let $p\in\cs(X)$ be given and select a last time $q\in\cs(X)$ and $(\mu_n)_{n\in\mathbb{N}_0}$ as in \eqref{NEW-1} for $R=1$. For $x\in X$ and $0<t<1$ we have
\begin{equation*}
p\bigl({\textstyle\frac{T(t)x-T(0)x}{t}}-Ax\bigr)=p\bigl({\textstyle\frac{1}{t}}\Bigsum{n=2}{\infty}{\textstyle\frac{t^n}{n!}}A^nx\bigr)\leqslant {\textstyle\frac{1}{t}}\,q(x) \Bigsum{n=2}{\infty}{\textstyle\frac{t^n}{n!}}\mu_n=q(x)\bigl({\textstyle\frac{f(t)-f(0)}{t}}-\mu_1\bigr)
\end{equation*}
which shows that the generator of $\T$ is indeed $A$.
\end{proof}

\begin{thm}\label{THM-2} Let $X$ be a sequentially complete locally convex space and $A\in L(X)$. Assume that\vspace{-5pt}
\begin{equation}\label{NEW-2}
\exists\:R>0\;\forall\:p\in\cs(X)\;\exists\:q\in\cs(X)\;\forall\:n\in\mathbb{N}_0\;\exists\:\mu_n>0\;\forall\:x\in X\colon p(A^nx)\leqslant\mu_n q(x)\text{ and }\!\!\Bigsum{n=0}{\infty}{\textstyle\frac{\mu_n}{n!}}R^n<\infty\vspace{-5pt}
\end{equation}
holds. Then $A$ generates a $\Cnull$-semigroup $\T$. The map $T(\cdot)\colon[0,R]\rightarrow L_b(X)$ is uniformly continuous and given by the exponential series expansion (EXP) where the latter converges in $L_b(X)$.
\end{thm}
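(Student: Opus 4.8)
The plan is to recycle almost the entire argument of Theorem~\ref{THM-1}, but now with the \emph{single} fixed $R$ supplied by \eqref{NEW-2}, in order to build the semigroup first on $[0,R]$. I would fix this $R$ and, for $t\in[0,R]$, define $T(t)$ by the series (EXP). Choosing $q$ and $(\mu_n)_{n\in\mathbb{N}_0}$ as in \eqref{NEW-2}, the estimate $p(A^nx)\leqslant\mu_n q(x)$ together with $\Bigsum{n=0}{\infty}\frac{\mu_n}{n!}R^n<\infty$ shows, verbatim as in the first part of the proof of Theorem~\ref{THM-1}, that (EXP) converges absolutely in $L_b(X)$ for every $t\in[0,R]$ and that $f(t)=\Bigsum{n=0}{\infty}\frac{\mu_n}{n!}t^n$ defines a nondecreasing function $f\in C([0,R])$ (convergence at the endpoint $R$ together with monotonicity gives continuity up to $R$). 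From this I would read off three facts on $[0,R]$: the growth estimate $p(T(t)x)\leqslant f(t)q(x)$; uniform continuity of $T(\cdot)\colon[0,R]\rightarrow L_b(X)$, since for $0\leqslant s\leqslant t\leqslant R$ one has $p_B(T(t)-T(s))\leqslant(\sup_{x\in B}q(x))(f(t)-f(s))$ and $f$ is uniformly continuous on the compact interval $[0,R]$; and the \emph{local} semigroup identity $T(t)T(s)=T(t+s)$ whenever $t,s\geqslant0$ and $t+s\leqslant R$, which is precisely the Cauchy-product computation of Theorem~\ref{THM-1} restricted to those $t,s$ for which all series involved still converge. The generator computation of Theorem~\ref{THM-1} is purely local at $0$ and therefore already yields $\lim_{t\searrow0}(T(t)x-x)/t=Ax$.

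The genuinely new difficulty, and what I expect to be the main obstacle, is that for $t>R$ the series (EXP) need no longer converge, so $T(t)$ must be produced from the local data alone. I would define, for arbitrary $t\geqslant0$, $T(t)=T(t/n)^n$ for any $n\in\mathbb{N}$ large enough that $t/n\leqslant R$, where $T(t/n)$ is given by (EXP). Well-definedness (independence of $n$) follows from the local identity: comparing $n$ and $m$ through $nm$, it gives $T(t/n)=T(t/(nm))^m$ and $T(t/m)=T(t/(nm))^n$ (all the points $k\,t/(nm)$ occurring stay $\leqslant t/\min\{n,m\}\leqslant R$), whence $T(t/n)^n=T(t/(nm))^{nm}=T(t/m)^m$. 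To upgrade the local identity to a genuine semigroup law I first note that all operators $T(u)$, $u\in[0,R]$, commute: each $T(u)$ is an $L_s(X)$-limit of polynomials in $A$, hence commutes with every $A^k$ and thus with every $T(v)$. Consequently, for $t,s\geqslant0$ and $n$ chosen large for $t+s$, the local identity gives $T((t+s)/n)=T(t/n)T(s/n)$, and commutativity yields $T(t+s)=T((t+s)/n)^n=T(t/n)^nT(s/n)^n=T(t)T(s)$.

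It remains to prove strong continuity of the extended family on all of $[0,\infty)$. The key auxiliary fact is equicontinuity of $\{T(\tau)\colon\tau\in[0,R]\}$, immediate from $p(T(\tau)x)\leqslant f(\tau)q(x)\leqslant(\max_{[0,R]}f)\,q(x)$; iterating this over a fixed finite number of factors controls $p(T(\tau)^jz)$ by a single continuous seminorm, uniformly in $\tau\in[0,R]$. To show continuity at $t_0\geqslant0$ I would fix $m$ with $t_0/m<R$ and a neighbourhood of $t_0$ on which $t/m\leqslant R$, write $T(t)=T(t/m)^m$ and $T(t_0)=T(t_0/m)^m$, and use the telescoping identity
$$
(T(t)-T(t_0))x=\Bigsum{j=0}{m-1}T(t/m)^{m-1-j}\bigl(T(t/m)-T(t_0/m)\bigr)T(t_0/m)^{j}x.
$$
For each fixed $j$ the vector $(T(t/m)-T(t_0/m))T(t_0/m)^jx$ tends to $0$ in $X$ as $t\rightarrow t_0$, by continuity of $T(\cdot)$ on $[0,R]$ into $L_s(X)$, and the equicontinuity just noted lets me apply the bounded power $T(t/m)^{m-1-j}$ without destroying this convergence; summing the finitely many terms gives $(T(t)-T(t_0))x\rightarrow0$. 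Finally, since the extended semigroup coincides with (EXP) near $0$, the generator computation of the first paragraph identifies its generator as $A$, while the uniform continuity of $T(\cdot)$ on $[0,R]$ and the validity of (EXP) there are exactly the remaining assertions. The hardest point is this last continuity argument: unlike in Theorem~\ref{THM-1}, where $R$ was arbitrary and uniform continuity was available on every interval, here only the \emph{local} uniform continuity on $[0,R]$ can be used, and it must be propagated to large times through the semigroup law and the equicontinuity estimate.
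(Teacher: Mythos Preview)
Your proposal is correct and follows the paper's strategy: reuse the proof of Theorem~\ref{THM-1} verbatim to obtain a uniformly continuous local semigroup on $[0,R]$ given by (EXP), then extend to all of $[0,\infty)$ algebraically. The only difference is the extension formula: the paper writes $t=nR+w$ with $0\leqslant w<R$ and sets $T(t)=T(R)^nT(w)$, whereas you use $T(t)=T(t/n)^n$ for any $n$ with $t/n\leqslant R$; both are standard devices for globalizing a local one-parameter semigroup, and both rely on the same ingredients (the local evolution law and commutativity of the $T(u)$'s). Your version has the advantage of making well-definedness and the global semigroup law completely explicit, and your telescoping argument for strong continuity at arbitrary $t_0$ via equicontinuity of $\{T(\tau):\tau\in[0,R]\}$ fills in a step the paper leaves to the reader. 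Your direct estimate $p_B(T(t)-T(s))\leqslant K\,(f(t)-f(s))$ for uniform continuity on $[0,R]$ is also slightly cleaner than the two-step argument (continuity at $0$ plus evolution law) used in the proof of Theorem~\ref{THM-1}.
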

\begin{proof} The proof of Theorem \ref{THM-1} shows that with $R>0$ as in \eqref{NEW-2} the formula (EXP) defines a function $T(\cdot)\colon[0,R]\rightarrow L(X)$ which is uniformly continuous and satisfies $T(s+t)=T(s)T(t)$ whenever $s$, $t\geqslant0$ are such that $s+t\leqslant R$ holds. For $t>R$ we define $T(t)=T(R)^nT(w)$ where $t=nR+w$ with $n\in\mathbb{N}_0$ and $0\leqslant w<R$ and get the $\Cnull$-semigroup $\T$ whose generator is $A$ by the proof of Theorem \ref{THM-1}.
\end{proof}

\begin{rem}\label{REM} 
\begin{compactitem}
\item[(i)] Minor adjustments in the proof of Theorem \ref{THM-1} show that if $A$ satisfies condition \eqref{NEW-1} or condition \eqref{NEW-2} then it even generates a $\Cnull$-group.\vspace{3pt}
\item[(ii)] Let $\Gamma\subseteq\cs(X)$ be a fundamental system of seminorms. Each of the conditions \eqref{A-BDD}-\eqref{NEW-2} holds already if the condition is true with $\Gamma$ instead of the set $\cs(X)$ of all continuous seminorms. For the conditions \eqref{M-TOP}-\eqref{NEW-2} the equivalence holds; \eqref{A-BDD} is satisfied if and only if
\begin{equation}\label{A-BDD-GEN}
\exists\:\mu>0\;\forall\:p\in\Gamma\:\exists\:q\in\Gamma,\,M\geqslant0\;\forall\:n\in\mathbb{N}_0,\,x\in X\colon p(A^nx)\leqslant{}M\mu^n q(x)
\end{equation}
is valid for some, or equivalently, for all fundamental systems of seminorms $\Gamma$.\vspace{3pt}
\item[(iii)] Under the assumptions of Theorem \ref{THM-2} the series representation (EXP) holds for $0\leqslant{}t\leqslant{}R$. The proof does not show that it also holds for $t>R$. However, there is no conrete example of a $\Cnull$-semigroup with this property.
\end{compactitem}
\end{rem}

\bigskip
\vspace{-10pt}
\section{Examples}

\smallskip

Let $B=(b_{j,k})_{j,k\in\mathbb{N}}$ be a K\"othe matrix \cite[Section 27]{MV}, i.e., $0\leqslant{}b_{j,k}\leqslant b_{j,k+1}$ holds for all $j$, $k\in\mathbb{N}$ and for every $j\in\mathbb{N}$ there exists $k\in\mathbb{N}$ such that $b_{j,k}>0$ holds. Let
\begin{eqnarray*}
\lambda^r(B)&=&\big\{x\in\mathbb{C}^\mathbb{N}\:;\:\forall\:k\in\mathbb{N}\colon\:\|x\|_k=\bigl(\Bigsum{j=1}{\infty}|b_{j,k}x_j|^r\bigr)^{1/r}<\infty\big\},\\
\lambda^{\infty}(B)&=&\big\{x\in\mathbb{C}^\mathbb{N}\:;\:\forall\:k\in\mathbb{N}\colon\:\|x\|_k=\sup_{j\in\mathbb{N}} b_{j,k}|x_j|<\infty\big\}
\end{eqnarray*}
denote the K\"othe echelon spaces of order $r\in[1,\infty]$, which are Fr\'{e}chet spaces with the fundamental system $\Gamma=(\|\cdot\|_q)_{q\in\mathbb{N}}$ of seminorms. In view of the conditions explained in Section \ref{NOT} and \ref{SEC-0} we identify $q\equiv\|\cdot\|_q$, cf.~also Remark \ref{REM}(ii).

\smallskip

Our first result shows that for diagonal operators $Ax=(a_jx_j)_{j\in\mathbb{N}}$ the situation is similar to the case in a classical Banach sequence space if and only if $(a_j)_{j\in\mathbb{N}}$ is bounded and that in this case Theorem \ref{THM-Y} is applicable.

\smallskip

\begin{prop}\label{PROP-1} Let $r\in[1,\infty]$ and let $B$ be a K\"othe matrix such that there exists a continuous norm on $\lambda^r(B)$. Let $(a_j)_{j\in\mathbb{N}}\subseteq\mathbb{C}$ be such that $A\colon\lambda^r(B)\rightarrow\lambda^r(B)$ with $Ax=(a_jx_j)_{j\in\mathbb{N}}$ is well-defined. Then the following are equivalent.
\begin{compactitem}\vspace{3pt}
\item[(i)] The operator $A$ is a-bounded.\vspace{2pt}
\item[(ii)] The operator $A$ is m-topologizable.\vspace{2pt}
\item[(iii)] The sequence $(a_j)_{j\in\mathbb{N}}$ is bounded.\vspace{5pt}
\end{compactitem}
If (i)--(iii) are satisfied then $A$ generates a $\Cnull$-semigroup by Theorem \ref{THM-Y}.
\end{prop}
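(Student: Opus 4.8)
The plan is to close the cycle of implications $(i)\Rightarrow(ii)\Rightarrow(iii)\Rightarrow(i)$ and then read off the final assertion. The implication $(i)\Rightarrow(ii)$ needs no work, as it was already noted in Section \ref{NOT} that every a-bounded operator is m-topologizable. For $(iii)\Rightarrow(i)$ I would argue directly from \eqref{A-BDD}. Since $A^nx=(a_j^nx_j)_{j\in\mathbb{N}}$, boundedness of $(a_j)_j$, say $|a_j|\leqslant\mu$ for all $j$ with some $\mu>0$ (take $\mu=1$ if $A=0$), gives for each $k\in\mathbb{N}$ and $r\in[1,\infty)$ the estimate
$$\|A^nx\|_k=\Bigl(\Bigsum{j=1}{\infty}|a_j|^{rn}|b_{j,k}x_j|^r\Bigr)^{1/r}\leqslant\mu^n\|x\|_k,$$
and the analogous bound with a supremum when $r=\infty$. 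Thus \eqref{A-BDD} holds with $q=p$, which is exactly a-boundedness.

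The heart of the matter is $(ii)\Rightarrow(iii)$, and here the hypothesis that $\lambda^r(B)$ carries a continuous norm is essential. Any continuous norm is dominated by some $\|\cdot\|_{k_0}$, and since each basis vector $e_j$ lies in $\lambda^r(B)$ and has positive norm, this forces $b_{j,k_0}=\|e_j\|_{k_0}>0$ for every $j$. Applying \eqref{M-TOP} to $p=\|\cdot\|_{k_0}$ yields some $m\geqslant k_0$ and $\mu>0$ with $\|A^nx\|_{k_0}\leqslant\mu^n\|x\|_m$ for all $n$ and $x$. Testing this on $x=e_j$ gives $|a_j|^n b_{j,k_0}\leqslant\mu^n b_{j,m}$, hence
$$|a_j|\leqslant\mu\Bigl(\frac{b_{j,m}}{b_{j,k_0}}\Bigr)^{1/n}\quad\text{for every }n\in\mathbb{N}.$$
As $b_{j,k_0}>0$, the quotient $b_{j,m}/b_{j,k_0}$ is a fixed positive number, so its $n$-th root tends to $1$; letting $n\to\infty$ yields $|a_j|\leqslant\mu$ uniformly in $j$, i.e.\ $(a_j)_j$ is bounded.

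Finally, if $(i)$--$(iii)$ hold then $A$ is a-bounded, and with $\mu$ as in \eqref{A-BDD} one computes $p(((1/\mu)A)^nx)=\mu^{-n}p(A^nx)\leqslant q(x)$ for all $n$ and $x$, so $(1/\mu)A$ is power bounded and Theorem \ref{THM-Y} applies to produce the desired $\Cnull$-semigroup. The only genuinely delicate step is $(ii)\Rightarrow(iii)$, and within it the point where the continuous-norm hypothesis enters: without an index $k_0$ satisfying $b_{j,k_0}>0$ for all $j$ one cannot test on the basis vectors and the $n$-th root argument breaks down, so I expect this to be exactly where the proof would fail for Köthe spaces lacking a continuous norm.
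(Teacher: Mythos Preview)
Your proof is correct and follows the same cycle of implications as the paper; the only difference is that for $(ii)\Rightarrow(iii)$ you argue directly by testing on the unit vectors $e_j$ and letting $n\to\infty$ in $|a_j|\leqslant\mu\,(b_{j,m}/b_{j,k_0})^{1/n}$ to obtain $|a_j|\leqslant\mu$, whereas the paper argues by contradiction, picking $j_0$ with $|a_{j_0}|>\mu$ and computing $\sup_n\mu^{-n}\|A^ne_{j_0}\|_p=\infty$. Both arguments exploit the continuous-norm hypothesis in exactly the way you identified, namely to guarantee an index $k_0$ with $b_{j,k_0}>0$ for all $j$ so that the unit vectors become admissible test vectors.
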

\begin{proof} Let $1\leqslant{}r<\infty$. The case $r=\infty$ is similar. Since $\lambda^r(B)$ has a continuous norm, there exists $k_0\in\mathbb{N}$ such that $b_{j,k_0}>0$ holds for $j\in\mathbb{N}$. W.l.o.g.~we may assume $k_0=1$ and get that $b_{j,k}>0$ for all $k$ and $j\in\mathbb{N}$. Now we prove the equivalences.
\smallskip
\\(i)$\Rightarrow$(ii) This is true in general.
\smallskip
\\(ii)$\Rightarrow$(iii) Let $A$ satisfy \eqref{M-TOP} and assume that $(a_j)_{j\in\mathbb{N}}$ is unbounded. Let $p\in\mathbb{N}$ be given. Select $q\in\mathbb{N}$ and $\mu\geqslant0$ as in \eqref{M-TOP}. As $(a_j)_{j\in\mathbb{N}}$ is unbounded we can select $j_0\in\mathbb{N}$ with $|a_{j_0}|>\mu$. Put $x=(\delta_{j_0,j})_{j\in\mathbb{N}}$. By \eqref{M-TOP} we have
\begin{equation*}
\sup_{n\in\mathbb{N}_0}\|\mu{}^{-n}A^nx\|_{p}\leqslant\|x\|_{q}<\infty.
\end{equation*}
On the other hand we compute
\begin{equation*}
\sup_{n\in\mathbb{N}_0}\|\mu{}^{-n}A^nx\|_p
=\sup_{n\in\mathbb{N}_0}\mu{}^{-n}\bigl(\Bigsum{j=1}{\infty}|b_{j,q}a_j^nx_j|^r\bigr)^{1/r}
=\sup_{n\in\mathbb{N}_0}b_{j_0,q}\bigl|{\textstyle\frac{a_{j_0}}{\mu}}\bigr|^n
=\infty
\end{equation*}
which gives the desired contradiction.
\smallskip
\\(iii)$\Rightarrow$(i) Let $(a_j)_{j\in\mathbb{N}}$ be bounded. Put $\mu:=\sup_{j\in\mathbb{N}}|a_j|$. Let $p\equiv\|\cdot\|_p$ be given. Select $q=p$ and $M=1$. We estimate
\begin{equation*}
\|A^nx\|_p=\bigl(\Bigsum{j=1}{\infty}|b_{j,p}a_j^nx_j|^r\bigr)^{1/r} \leqslant \bigl(\Bigsum{j=1}{\infty}|b_{j,q}\lambda^nx_j|^r\bigr)^{1/r}\leqslant \mu^n\|x\|_q
\end{equation*}
for $x\in\lambda^{r}(B)$ and $n\in\mathbb{N}_0$ which shows \eqref{A-BDD-GEN}.
\end{proof}

\smallskip

Proposition \ref{PROP-1} applies in particular to diagonal operators on power series spaces \cite[Section 29]{MV}. Let us mention that many classical Fr\'{e}chet function spaces allow for a sequence space representation within this class of spaces.

\smallskip

For $r\in[1,\infty)$  Bonet, Ricker \cite[Prop.~5.5]{BR} showed that there exists an unbounded sequence $(\alpha_j)_{j\in\mathbb{N}}\subseteq\mathbb{C}$ such that $A\colon\lambda^r(B)\rightarrow\lambda^r(B)$, $Ax=(a_jx_j)_{j\in\mathbb{N}}$, is well-defined if and only if there exists an infinite subset $J\subseteq\mathbb{N}$ such that the sectional subspace $\lambda^r(J,B)=\bigl\{x\chi_J\:;\:x\in\lambda^r(J,B)\bigr\}$ see \cite[p.~481]{BR} for details, is Schwartz. Using \cite[Prop.~2.2]{BR} it follows that the latter is for instance the case if $\lambda^r(B)$ is Montel. This exhibits a large class of sequence spaces on which Theorem \ref{THM-Y} does not even apply to every diagonal operator.

\smallskip

Next, we give an example of an operator $A$ on a K\"othe echelon space  which is not a-bounded but m-topologizable and thus satisfies the assumptions of Theorem \ref{THM-1}. In particular this shows that the statements in Proposition \ref{PROP-1} are not equivalent for every K\"othe matrix.

\smallskip

\begin{ex}\label{EX-0} Let $B=(b_{j,k})_{j,k\in\mathbb{N}}$ be given by $b_{j,k}=1$ for $j\leqslant k$ and zero otherwise. Then, $\lambda^1(B)=\lambda^{\infty}(B)=\omega$ is the space of all sequences endowed with the topology of pointwise convergence. Let $A\colon \omega\rightarrow\omega$ be defined via $Ax=(jx_j)_{j\in\mathbb{N}}$ which gives rise to a linear and continuous operator. Then $A$ is not a-bounded, but m-topologizable and a generator by Theorem \ref{THM-1}. 
\end{ex}
\begin{proof} Firstly, we observe that $A\in L(X)$ holds since for each $k$ and each $x\in\omega$ the estimate $\|Ax\|_k\leqslant\|x\|_{k+1}$ is valid. Assume that $A$ is a-bounded. Select $\mu>0$ as in \eqref{A-BDD}. Take $p>\mu$ and put $p\equiv\|\cdot\|_p$. Select $q\equiv\|\cdot\|_q$ and $M\geqslant0$ as in \eqref{A-BDD}. Put $x=(1,1,\dots)$. By \eqref{A-BDD-GEN} the estimate $\|A^nx\|_p\leqslant M\mu^n\|x\|_m$ holds for every $n\in\mathbb{N}_0$ and thus $\sup_{n\in\mathbb{N}_0}\mu^{-n}\|A^nx\|_p<\infty$ holds. We compute
\begin{equation*}
\sup_{n\in\mathbb{N}_0}\mu^{-n}\|A^nx\|_p=\sup_{n\in\mathbb{N}_0}\mu^{-n}\max_{j=1,\dots,p}|j^n\cdot1|=\sup_{n\in\mathbb{N}_0}\,\bigl({\textstyle\frac{p}{\mu}}\bigr)^n=\infty
\end{equation*}
since $p/\mu>1$. Contradiction.
\smallskip
\\Now we show that $A$ is m-topologizable. Let $p\equiv\|\cdot\|_p$ be given. Select $q=p$ and $\mu=p>0$. Then for $n\in\mathbb{N}_0$ and $x\in\omega$ the estimate
\begin{equation*}
\|A^nx\|_p=\max_{j=1,\dots,p}|j^nx_j|\leqslant p^n \max_{j=1,\dots,p}|x_j|=\mu^n\|x\|_q
\end{equation*}
is true. Theorem \ref{THM-1} can be applied since \eqref{NEW-1} is satisfied with $(\mu_n)_{n\in\mathbb{N}_0}\equiv{}(\mu_{n,p,R})_{n\in\mathbb{N}_0}\equiv{}p$.
\end{proof}

\smallskip

A combination of Proposition \ref{PROP-1} and Example \ref{EX-0} even gives a characterization of those K\"othe spaces which have a continuous norm. In particular, Corollary \ref{PEPE-S-COR} shows that m-topologizable operators which are not a-bounded do exist on a large class of Fr\'{e}chet spaces.

\smallskip

\begin{cor}\label{PEPE-S-COR} Let $B$ be a K\"othe matrix and let $r\in[1,\infty]$ be fixed. Then $\lambda^r(B)$ has a continuous norm if and only if every m-topologizable diagonal operator on $\lambda^r(B)$ is a-bounded.
\end{cor}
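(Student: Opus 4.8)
The plan is to prove the two implications separately, obtaining the forward direction immediately from Proposition \ref{PROP-1} and the converse by a direct construction that generalizes Example \ref{EX-0}.

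For the forward implication, suppose $\lambda^r(B)$ carries a continuous norm and let $A$ be any well-defined m-topologizable diagonal operator. Then Proposition \ref{PROP-1} applies verbatim, and its implication (ii)$\Rightarrow$(i) states precisely that $A$ is a-bounded. So this direction needs no further argument. The real content is the converse, which I would establish by contraposition: assuming that $\lambda^r(B)$ has \emph{no} continuous norm, I exhibit an m-topologizable diagonal operator that is not a-bounded. The starting point is the standard description of the obstruction. As used already in the proof of Proposition \ref{PROP-1}, a continuous norm exists iff some seminorm $\|\cdot\|_{k_0}$ is already a norm, i.e.\ iff $b_{j,k_0}>0$ for all $j$. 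Writing $k(j)=\min\{k:b_{j,k}>0\}$, which is finite by the K\"othe condition, the absence of a continuous norm means exactly $\sup_{j}k(j)=\infty$. Hence I can greedily extract indices $j_1,j_2,\dots$ with $k(j_1)<k(j_2)<\cdots\to\infty$ and define the diagonal operator by $a_{j_\ell}=\ell$ and $a_j=0$ otherwise; for the K\"othe matrix of Example \ref{EX-0} this is literally the operator $Ax=(jx_j)$ on $\omega$.

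The key observation is that each seminorm $\|\cdot\|_p$ only ``sees'' the finitely many special indices $j_\ell$ with $k(j_\ell)\leqslant p$, because $b_{j_\ell,p}=0$ as soon as $k(j_\ell)>p$. Denoting by $L_p$ the number of such indices, the computation of Example \ref{EX-0} carries over and gives
\[
\|A^nx\|_p\leqslant L_p^{\,n}\,\|x\|_p\qquad(n\in\N_0,\ x\in\lambda^r(B)),
\]
so that \eqref{M-TOP} holds with $q=p$ and $\mu=L_p$ (taking $\mu=1$ in the degenerate case $L_p=0$); the case $n=1$ also shows $A\in L(\lambda^r(B))$, and the argument is identical for $r=\infty$. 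To see that $A$ is not a-bounded I would use the fundamental-system form \eqref{A-BDD-GEN}: given a putative constant $\mu>0$, choose $\ell_0>\mu$, set $p=k(j_{\ell_0})$ so that $b_{j_{\ell_0},p}>0$, and test with the unit vector $e_{j_{\ell_0}}=(\delta_{j_{\ell_0},j})_{j\in\N}$. Then $\|A^ne_{j_{\ell_0}}\|_p=b_{j_{\ell_0},p}\,\ell_0^{\,n}$ while $\|e_{j_{\ell_0}}\|_q=b_{j_{\ell_0},q}$, so the a-bounded estimate would force $b_{j_{\ell_0},p}(\ell_0/\mu)^n\leqslant M\,b_{j_{\ell_0},q}$ for all $n$, which is impossible since $\ell_0/\mu>1$ and $b_{j_{\ell_0},p}>0$.

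The main obstacle, exactly as in Example \ref{EX-0}, is to reconcile two opposing demands: the eigenvalues must be unbounded in order to break a-boundedness, yet every individual seminorm must be dominated by a single geometric factor $\mu^n$ in order to retain m-topologizability. What makes this reconcilable is precisely the failure of a continuous norm: by placing the growing eigenvalues on indices whose ``entry level'' $k(j_\ell)$ tends to infinity, any fixed $\|\cdot\|_p$ truncates $A$ to a finite block on which the largest eigenvalue is bounded by $L_p$. I expect the only slightly delicate point to be stating and justifying the characterization of ``no continuous norm'' through $\sup_j k(j)=\infty$ and the extraction of the sequence $(j_\ell)$; the seminorm estimates themselves are then routine and run parallel to Example \ref{EX-0}.
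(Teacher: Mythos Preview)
Your proposal is correct and proceeds along the same underlying construction as the paper: in the absence of a continuous norm one places unbounded diagonal entries on indices whose ``entry level'' $k(j)$ tends to infinity, so that each fixed seminorm $\|\cdot\|_p$ only sees a finite block of the operator. The difference is in packaging. The paper invokes the classical Bessaga--Pe{\l}czy{\'n}ski result that a Fr\'echet space without a continuous norm contains a complemented copy of $\omega$, realizes this copy as a sectional subspace $\lambda^r(J,B)$ with $J=\{j_n\}$, and then transports the operator of Example~\ref{EX-0} through the isomorphism; m-topologizability and the failure of a-boundedness are read off from Example~\ref{EX-0}. You instead characterize the lack of a continuous norm directly via $\sup_j k(j)=\infty$, write down the operator explicitly, and verify both properties by hand with the estimate $\|A^nx\|_p\leqslant L_p^{\,n}\|x\|_p$. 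Your route is more elementary and self-contained (no external citation needed), while the paper's route makes the link to Example~\ref{EX-0} structurally transparent; the two are otherwise the same argument.
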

\begin{proof} \textquotedblleft{}$\Rightarrow$\textquotedblright{} This is Proposition \ref{PROP-1}.

\smallskip

\textquotedblleft{}$\Leftarrow$\textquotedblright{} The following arises from an inspection of the proof of a classical result due to Bessage, Pe{\l}czy{\'n}ski \cite{BP}, see Bonet, Perez Carreras \cite[Thm.~2.6.13]{BPC}. Under the assumption that $\lambda^r(B)$ has no continuous norm, the latter proof shows that
$$
T\colon\omega\rightarrow\lambda^r(B),\,(a_n)_{n\in\mathbb{N}}\mapsto\Bigsum{n=1}{\infty}a_nx_n
$$
is an isomorphism whenever the sequence $(x_n)_{n\in\mathbb{N}}$ is selected such that $x_n\in\ker\|\cdot\|_n\backslash\ker\|\cdot\|_{n+1}$ for any $n\in\mathbb{N}$. Here, w.l.o.g.~we assume that $\ker\|\cdot\|_{n+1}\subset\ker\|\cdot\|_n$ is a strict subspace. Taking $\ker\|\cdot\|_n=\lambda^r(\{j\:;\:b_{n,j}=0\},B)$ into account it follows that we can select  $x_n$ to be $j_n$-th unit vector with $j_n$ such that $b_{n,j_n}=0$ and $b_{n+1,j_n}>0$. This selection produces a sequence $(x_n)_{n\in\mathbb{N}}$ of vectors with pairwise disjoint supports. With this selection the image of $T$ is exactly the sectional subspace $\lambda^r(J,B)$ with $J=\{j_n\:;\:n\in\mathbb{N}\}$. We define the operator $A\colon\lambda^r(B)\rightarrow\lambda^r(B)$ via $Ay=(a_jy_j)_{j\in\mathbb{N}}$ with
$$
a_j=\begin{cases}
\; jx_j & \text{ for } j\in J,\\
\hspace{6.5pt} 0& \text{ otherwise},
\end{cases}
$$
and conclude from Example \ref{EX-0} and the above that $A$ is not a-bounded but m-topologizable.
\end{proof}

\smallskip

We conclude this section with an example of an operator which is not m-topologizable but which nevertheless satisfies condition \eqref{NEW-1} so that Theorem \ref{THM-1} can still be applied.

\smallskip

\begin{ex} Let $B=(b_{j,k})_{j,k\in\mathbb{N}}$ be given by $b_{j,k}=j^k$. Then, $\lambda^1(B)=\lambda^{\infty}(B)=s$ is the space of rapidly decreasing sequences. Let $A\colon s\rightarrow s$ be defined via $Ax=(\log j\cdot x_j)_{j\in\mathbb{N}}$ which gives rise to a linear and continuous operator. Then the following statements are true.
\begin{compactitem}\vspace{3pt}
\item[(i)] Condition \eqref{NEW-1} holds and $A$ thus generates a uniformly continuous $\Cnull$-semigroup by Theorem \ref{THM-1}.\vspace{2pt}
\item[(ii)] In condition \eqref{NEW-1}, w.r.t.~$\Gamma=(\|\cdot\|_{q})_{q\in\mathbb{N}}$, the constants $\mu_n>0$ depend necessarily on $R>0$.\vspace{2pt}
\item[(iii)] The operator $A$ is topologizable but not m-topologizable.
\end{compactitem}
\end{ex}
\begin{proof} For $p\in\mathbb{N}$ and $x\in s$ we compute
$$
\|Ax\|_p= \sup_{j\in\mathbb{N}}\,\log j\abs{x_j}j^p=\sup_{j\in\mathbb{N}}j^{-1}\log j\abs{x_j}j^{p+1} \leqslant e^{-1}\|x\|_{p+1}
$$
which shows $A\in L(s)$. 

\smallskip

(i) Let $p\in\mathbb{N}$ and $R>0$ be given. Select $q\in\mathbb{N}$ such that $q-p>R$. For given $n\in\mathbb{N}$ put $\mu_n=(n/(q-p))^ne^{-n}$.  For $x\in s$ we have
$$
\|A^nx\|_p= \sup_{j\in\mathbb{N}}\,(\log j)^n|x_j|j^p=\sup_{j\in\mathbb{N}}\,(\log j)^n j^{-(q-p)}\norm{x}_q\leq \bigl({\textstyle\frac{n}{q-p}}\bigr)^ne^{-n}\norm{x}_q=\mu_n\|x\|_q
$$
and by the Cauchy root test the series
$$
\Bigsum{n=0}{\infty}{\textstyle\frac{\mu_n}{n!}}R^n=\Bigsum{n=0}{\infty}{\textstyle\frac{n^n\,R^n}{e^n\,(q-p)\,n!}}
$$
converges.

\smallskip

(ii) Let $p\in\mathbb{N}$. We assume that we can select $q\in\mathbb{N}$ and $(\mu_n)_{n\in\mathbb{N}_0}$ such that $\|A^nx\|_p\leqslant\mu_n\|x\|_q$ holds for every $x\in s$ and such that $\Bigsum{n=0}{\infty}{\textstyle\frac{\mu_n}{n!}}R^n$ is convergent for every $R>0$. For $j\in\mathbb{N}$ we denote by $e_j$ the $j$-th unit vector and compute
$$
\|A^ne_j\|_p=(\log j)^n j^p=(\log j)^n j^{p-q}\|e_j\|_q
$$
which yields
$$
\mu_n\geqslant\sup_{j\in\mathbb{N}}\,(\log j)^n j^{p-q}.
$$
Computations show that for every fixed $n\in\mathbb{N}_0$ with $(q-p)|n$ the supremum over $j\in\mathbb{N}$ is attained for $j=e^{n/(q-p)}$ and is equal to $(n/(q-p))^ne^{-n}$. We get
$$
\limsup_{n\rightarrow\infty} \bigl({\textstyle\frac{\mu_n}{n!}}\bigr)^{1/n}\geqslant\hspace{-8pt}\lim_{\stackrel{\scriptstyle\phantom{R} n\rightarrow\infty\phantom{R}}{\scriptstyle\phantom{R}(q-p)|n\phantom{R}}}\bigl({\textstyle\frac{\mu_n}{n!}}\bigr)^{1/n}\geqslant\hspace{-8pt}\lim_{\stackrel{\scriptstyle\phantom{R} n\rightarrow\infty\phantom{R}}{\scriptstyle\phantom{R}(q-p)|n\phantom{R}}}\bigl(n^n/((q-p)^n e^{n}n!)\bigr)^{1/n}={\textstyle\frac{1}{q-p}}
$$
that is the radius of convergence of the power series above is less or equal to $q-p$. Contradiction.

\smallskip

(iii) Condition \eqref{NEW-1} implies that $A$ is topologizable. Assume that $A$ is m-topologizable. Let $p\in\mathbb{N}$ be given. We select $q\in\mathbb{N}$ and $\mu>0$ according to \eqref{M-TOP}. Then for $n\in\mathbb{N}_0$ and $x\in X$ the estimate $\|A^nx\|_p\leqslant\mu^n\|x\|_q$ holds. The estimates and computations performed in the proof of (ii) show that 
$$
\mu^n\geqslant \bigl({\textstyle\frac{n}{q-p}}\bigr)^{n}e^{-n}
$$
holds for infinitely many $n\in\mathbb{N}_0$ which is not possible with a finite $\mu>0$. Contradiction.
\end{proof}

\bigskip

\vspace{-10pt}
\section{Counterexamples}

\smallskip

In this final section we show that topologizability alone, i.e., without any growth control on the $\mu_{n}$ is neither sufficient nor necessary for the generation of a $\Cnull$-semigroup. To establish our first example we need the following lemma which is kind of a counterpart of \cite[II.2.3]{EN}.

\smallskip

\begin{lem} \label{l-restr-sem} Let $X$, $Y$ be Fr\'echet spaces such that $Y\subseteq X$ holds with continuous inclusion map. Let $A\in L(X)$ be such that $B:=A|_Y\in L(Y)$ and $B$ generates a $\Cnull$-group $(S(t))_{t\in\mathbb{R}}$. If $A$ generates a $\Cnull$-semigroup $\T$, then $T(t)\vert_Y=S(t)$ is valid for all $t\geqslant0$. 
\end{lem}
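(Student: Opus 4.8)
The plan is to reduce the assertion to the uniqueness of solutions of the abstract Cauchy problem $u'=Au$ in $X$. Fix $y\in Y$ and compare the two orbits $t\mapsto T(t)y$ and $s\mapsto v(s):=S(s)y$. The second a priori lives in $Y$, but since the inclusion $\iota\colon Y\hookrightarrow X$ is continuous and linear it carries the $Y$-derivative to an $X$-derivative, so $v$ is a continuously differentiable curve in $X$ as well. Because $B=A|_Y$ generates the group $(S(t))_{t\in\mathbb{R}}$ we have $D(B)=Y$, hence $v'(s)=Bv(s)=Av(s)$, the last equality holding in $X$ as $A$ extends $B$. Thus $T(\cdot)y$ and $v$ are both solutions of $u'=Au$, $u(0)=y$, and the entire lemma comes down to showing that such a solution is unique.

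Before running the standard uniqueness argument I record the facts about $\T$ that it needs. Since $A\in L(X)$ is the generator, $D(A)=X$, so for every $z\in X$ the orbit $\sigma\mapsto T(\sigma)z$ is continuously differentiable on $[0,\infty)$ with $\tfrac{d}{d\sigma}T(\sigma)z=AT(\sigma)z=T(\sigma)Az$; in particular each $T(\sigma)$ commutes with $A$. I shall also use local equicontinuity: for every $\tau_0>0$ the family $\{T(\tau)\:;\:\tau\in[0,\tau_0]\}$ is equicontinuous. This is precisely where the Fr\'echet hypothesis is used --- each orbit is continuous, hence bounded on $[0,\tau_0]$, so the family is pointwise bounded, and barrelledness of $X$ upgrades this to equicontinuity via Banach--Steinhaus.

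Now fix $t>0$ and set $w\colon[0,t]\to X$, $w(s)=T(t-s)v(s)$. The decisive step is to differentiate $w$ by the product rule. From
$$
\frac{w(s+h)-w(s)}{h}=T(t-s-h)\,\frac{v(s+h)-v(s)}{h}+\frac{T(t-s-h)-T(t-s)}{h}\,v(s)
$$
the first summand converges to $T(t-s)v'(s)=T(t-s)Av(s)$ --- here local equicontinuity is exactly what permits passing to the limit in $T(t-s-h)(\cdot)$ while its argument converges --- and the second summand converges to $-AT(t-s)v(s)$ by the differentiability recorded above. Using the commutation $AT(t-s)=T(t-s)A$ the two cancel, so $w'(s)=0$ for $s\in(0,t)$ while $w$ is continuous on $[0,t]$. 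I expect this product-rule step, and in particular the joint-continuity estimate controlling the first summand, to be the main obstacle; it hinges entirely on the automatic local equicontinuity from the previous paragraph.

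Finally, from $w'\equiv0$ I deduce that $w$ is constant: for arbitrary $\ell\in X'$ the scalar function $\ell\circ w$ has vanishing derivative on $(0,t)$ and is continuous on $[0,t]$, hence constant, and since $X'$ separates the points of the Hausdorff space $X$ the curve $w$ itself is constant. Evaluating the endpoints gives $w(0)=T(t)y$ and $w(t)=v(t)=S(t)y$, whence $T(t)y=S(t)y$. As $y\in Y$ and $t\geqslant0$ were arbitrary, $T(t)|_Y=S(t)$ for all $t\geqslant0$. (Only the values $S(s)y$ with $s\geqslant0$ entered, so the negative-time part of the group plays no role.)
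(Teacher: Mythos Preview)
Your argument is correct and follows the same strategy as the paper: define the mixed orbit and show its derivative vanishes. The paper uses the parametrization $\varphi(s)=T(s)S(t-s)y$ rather than your $w(s)=T(t-s)S(s)y$ and appeals to Banach--Steinhaus in the form ``$h^{-1}(\id_X-T(h))\to -A$ uniformly on precompact sets'' applied to the compact orbit $\{S(u)y:u\in[-\delta,t]\}$, whereas you use local equicontinuity of $\{T(\tau):\tau\in[0,\tau_0]\}$ together with the commutation $AT(\sigma)=T(\sigma)A$; these are minor technical variants of the same idea, and your parametrization has the pleasant side effect you point out, that only the values $S(s)$ for $s\geqslant0$ are used, so the group hypothesis on $B$ is in fact not needed.
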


\begin{proof} Take $y\in Y$, $t>0$, $\delta>0$ and define the function ${\varphi}\colon {[0,t]}\rightarrow {X}$, $\varphi(s):=T(s)S(t-s)y$. We show that $\varphi$ is differentiable and that we have $\varphi'(s)=0$ for all $s$. Fix $s\in[0,t]$ and take $(h_n)_{n\in\N}\subseteq\mathbb{R}$, $h_n\rightarrow 0$. Then
\begin{align*}
(\varphi(s)-\varphi(s+h_n))/h_n
&= h_n^{-1}\bigl(T(s)S(t-s)y-T(s+h_n)S(t-s-h_n)y\bigr)\\
&=T(s)\bigl[h_n^{-1}\bigl(S(t-s)y-S(t-s-h_n)y\bigr)\\
&\phantom{=\;}+h_n^{-1}\bigl(S(t-s-h_n)y-T(h_n)S(t-s-h_n)y\bigr)\bigr].
\end{align*}
For $n\rightarrow\infty$ the first summand in the bracket on the right hand side converges to $BS(t-s)y$ in $Y$ and hence in $X$. For the second summand we define the sequence $(A_n)_{n\in\mathbb{N}}\in L(X)$, $A_nx=h_n^{-1}(\id_X-T(h_n))x$ for $x\in X$, which converges pointwise to $-A$. By the Banach-Steinhaus Theorem \cite[11.1.3]{Jarchow} we get that $A_n\rightarrow -A$ converges uniformly on the precompact subsets of $X$. The set $\{S(u)y\:;\:u\in [-\delta, t]\}$ is compact in $Y$, hence precompact in $X$. Thus for every seminorm $p\in\cs(X)$ and every $\epsilon>0$ there exists $N_1$ such that for all $n\geqslant N_1$ the estimate $p((A_n+A)S(t-s-h_n)y)\leqslant\epsilon/2$ holds. Denote by $\iota\colon Y\hookrightarrow X$ the embedding map. As $A\circ\iota$ is continuous, there exist a seminorm $q\in\cs(Y)$ and a constant $c>0$ such that $p(A(S(t-s)-S(t-s-h_n))y)\leqslant c q( (S(t-s)-S(t-s-h_n))y)$ for all $n\in\mathbb{N}$. Since $(S(t))_{t\in\mathbb{R}}$ is strongly continuous, there exists $N_2$ such 
that $q((S(t-s)-S(t-s-h_n))y)<\epsilon/(2c)$ for all $n\geqslant N_2$. Put $N=\max(N_1,N_2)$. Then for $n\geqslant N$ we have
$$
p\bigl(A_nS(t-s-h_n)y+AS(t-s)y\bigr)\leqslant
 p\bigl( (A_n+A)S(t-s-h_n)y\bigr)+p\bigl(A\bigl(S(t-s)-S(t-s-h_n)\bigr)y\bigr)\leqslant\epsilon.
$$
This shows that the second summand converges to $-AS(t-s)y$. Since $T(s)$ is continuous and both $(h_n)_{n\in\mathbb{N}}$ and $s$ were arbitrary we get that
$$
\varphi'(s)=T(s)\bigl(AS(t-s)y-BS(t-s)y\bigr)=0.
$$
Hence $S(t)y=\varphi(0)=\varphi(t)=T(t)y$. Since $y$ and $t$ were arbitrary this finishes the proof.
\end{proof}

\smallskip

Now we are prepared to show that the differentiation operator on $H(\mathbb{D})$ does not generate a $\Cnull$-semigroup. It was shown in \cite[Ex.\,3]{JB} that the latter is topologizable. For the convenience of the reader we repeat the proof for the latter fact.

\smallskip

\begin{ex} Let $H(\mathbb{D})$ be the space of holomorphic functions on the unit disc. The differentiation operator $A\in L(H(\mathbb{D}))$, $Af=f'$, is topologizable but does not generate a $C_0$-semigroup.
\end{ex}
\begin{proof} Firstly we show that $A$ is topologizable, cf.~\cite[Ex.\,3]{JB}. The topology of $H(\mathbb{D})$ is given by the seminorms $\norm{f}_q=\sup_{\abs{z}\leqslant q}\abs{f(z)}$, $0<q<1$. For each $n\in\N$, by Cauchy integral formula we get
$$
\norm{A^nf}_q=\sup_{\abs{z}\leqslant{}q}\abs{f^{n}(z)}\leqslant\sup_{\abs{z}\leqslant{}q}\,\bigl|{\textstyle\frac{n!}{2\pi i}}\int_{\abs{w=s}}{\textstyle\frac{f(w)}{(w-z)^{n+1}}}\bigr|dw\leqslant{\textstyle\frac{n!s}{(s-q)^{n+1}}}\norm{f}_s
$$
for any $q<s<1$ and $f\in H(\mathbb{D})$. 

\smallskip

To show that $A$ does not generate a $\Cnull$-semigroup we use Lemma \ref{l-restr-sem}: Let $X=H(\mathbb{D})$ and $Y=H(\mathbb{C})$ be the space of all holomorphic functions on the complex plane with the topology given by seminorms $\norm{f}_q=\sup_{\abs{z}\leqslant{}q}\abs{f(z)}$, $0<q<\infty$. We identify $f\in H(\mathbb{C})$ with its restriction $f|_{\mathbb{D}}\in H(\mathbb{D})$ and thus have $Y=H(\mathbb{C})\subseteq H(\mathbb{D})=X$. As before, the Cauchy integral formula gives $\norm{A^nf}_q=\|f^{(n)}\|_q\leqslant\frac{n!s}{(s-q)^{n+1}}\norm{f}_s$ for any $q<s$, $n\in\mathbb{N}_0$ and $f\in H(\mathbb{C})$. Hence, for arbitrary $0<q<\infty$ and $R>0$ we take $s>R+q$, put $\mu_n=\frac{n!s}{(s-q)^n}$ and get that $A$ satisfies the assumptions of Theorem \ref{THM-1}. Thus,
$A$ generates a strongly continuous group on $H(\mathbb{C})$, cf.~Remark \ref{REM}(ii). However, $A$ does not extend to $H(\mathbb{D})$. Indeed, take the sequence $f_k(z)=\sum_{n=0}^kx^n$ converging to $f(z)=\frac{1}{1-z}$ in $H(\mathbb{D})$ and consider
$$
\lim_{k\rightarrow \infty} T(1)f_k\bigl({\textstyle\frac{3}{4}}\bigr)=\lim_{k\rightarrow\infty} \Bigsum{n=0}{k}{\textstyle\frac{n!}{k!(n-k)!}}\bigl({\textstyle\frac{3}{4}}\bigr)^{k-n} \geqslant\lim_{k\rightarrow\infty} \bigl({\textstyle\frac{3}{4}}\bigr)^{k}\Bigsum{n=0}{k}{\textstyle{k \choose n}}= \lim_{k\rightarrow\infty}\bigl({\textstyle\frac{3}{4}}\bigr)^{k}2^{k}=\infty.
$$
Hence, Lemma \ref{l-restr-sem} implies that $A$ does not generate a $C_0$-semigroup on $H(\mathbb{D})$.
\end{proof}

\smallskip

Finally we show that the differentiation operator in $C^{\infty}(\mathbb{R})$ is not topologizable. It is well known that the latter generates the shift semigroup, cf.~\cite[Ex.~1]{FJKW}.

\smallskip

\begin{ex} The differentiation operator $Af=f'$ generates a $\Cnull$-semigroup on the space  $C^\infty(\mathbb{R})$ of smooth functions but is not topologizable.
\end{ex}
\begin{proof} The topology of $C^\infty(\mathbb{R})$ is given by the seminorms $\norm{f}_{K,p}=\sup_{x\in K, \alpha\leqslant p}\abs{f^{\alpha}(x)}$ for $K\subseteq\mathbb{R}$ compact and $p\in\N$.
It is well-known that the translation semigroup $\T$ defined by $T(t)f(x)=f(t+x)$ for $f\in C^\infty(\mathbb{R})$ and $t\geqslant0$ is strongly continuous and that it is generated by $A$.

\smallskip

Assume that $A$ is topologizable. Fix the seminorm $\norm{\cdot}_{K,p}$ with $K=[0,2\pi]$ and $p\in\mathbb{N}$. Then there exists a seminorm $\norm{\cdot}_{L,q}$ and a sequence $(\mu_n)_{n\in\N_0}\subseteq (0,\infty)$ such that $\|A^nf\|_{K,p}\leq \mu_n \norm{f}_{L,q}$ holds for every $n\in\N_0$ and $f\in C^\infty(\mathbb{R})$. Put $f_k(x)=\sin(xk)$, $k\in\N$ and $n=q-p+1$. Then for all $k\in\mathbb{N}$ we have $k^{n+p+1}=\norm{A^nf_k}_{[0,2\pi],p}\leqslant\mu_n\norm{f}_{L,q}=\mu_n k^q$. Contradiction. 
\end{proof}

\smallskip

\footnotesize

{\sc Acknowledgements. }The authors would like to thank J.~Bonet for pointing out the statement of Corollary \ref{PEPE-S-COR} to them. In addition they would like to thank L.~Frerick, E.~Jord\'{a} and J.~Wengenroth for valuable comments in the context of the conditions used in \cite{FJKW} and \cite{Z1, Z2}.

\normalsize

\bibliographystyle{amsplain}

\bigskip


\providecommand{\href}[2]{#2}

\end{document}